\title{Low-rank approximated Kalman filter using Oja's principal component flow for discrete-time linear systems}
\author{Daiki Tsuzuki${}^{1\dagger}$ and Kentaro Ohki${}^{1}$}
\abstract{%
The Kalman filter is indispensable for state estimation across diverse fields but faces computational challenges with higher dimensions. Approaches such as Riccati equation approximations aim to alleviate this complexity, yet ensuring properties like bounded errors remains challenging. Yamada and Ohki introduced low-rank Kalman-Bucy filters for continuous-time systems, ensuring bounded errors. This paper proposes a discrete-time counterpart of the low-rank filter and shows its system theoretic properties and conditions for bounded mean square error estimation. Numerical simulations show the effectiveness of the proposed method. 
}
\keywords{%
Kalman filter, low-rank approximation, large-scale systems
}
\begin{document}

\maketitle


\section{Introduction}
The Kalman filter \cite{kalman1960new} is a recursive optimal state estimation algorithm used in fields such as weather forecasting \cite{angeo-24-2451-2006,HUANG1996159,Verlaan1998Efficinet}, finance \cite{wells2013kalman,harvey2009unobserved}, and power grid management \cite{Cardoso2008,hoffmann2013minimal}. However, its computational complexity increases with the state and output dimensions. To address this issue, many methods like model reduction \cite{gugercin2004survey,Rowley2017} and Riccati equation approximations \cite{simon2006optimal,BENNER2016430,Simoncini2016,Bonnabel2013,schmidt2023rank} have been proposed. Model reduction is suitable for stable linear time-invariant systems but can compromise the physical meaning of the state. Conversely, the computational burden of the Kalman filter primarily stems from solving the Riccati equation, making it logical to simplify this equation. Approximations of the Riccati equation are straightforward to implement, even for nonlinear systems via the extended Kalman filter, and they preserve the physical meaning of state estimation.

Approximating the Riccati equation, however, often makes it difficult to guarantee properties like bounded estimation errors, and identifying the conditions for these properties is challenging. For continuous-time systems, Yamada and Ohki \cite{yamada2021new,yamada2021comparison} proposed low-rank Kalman-Bucy filters, modified from \cite{Bonnabel2013}, ensuring bounded mean square estimation errors under specific rank conditions \cite{tsuzuki2024low}.

The low-rank filter integrates the Oja flow \cite{OJA198569,yan1994global} with a low-dimensional Riccati equation. The Oja flow can capture the largest real part of the eigenvalues of a square matrix \cite{tsuzuki2024low}, effectively identifying the unstable eigenvalues and their modes in linear continuous-time systems. However, this property is not directly applicable to linear discrete-time systems, where the stable eigenvalue region is inside the unit circle in the complex plane. Consequently, developing low-rank Kalman filters for discrete-time systems that ensure bounded estimation errors remains challenging.


This paper addresses continuous-time systems with discrete-time observations, following \cite{schmidt2023rank}, and develops a low-rank Kalman filter. Although this covers only a part of discrete-time systems, it applies to many practical systems described by continuous-time dynamics.

The contributions of this paper are: 
(1) A new low-rank Kalman filter for a discrete-time system is proposed (Sec. \ref{sec:proposed_filter}). (2) The conditions for bounded mean square estimation errors for the proposed low-rank filter are derived (Theorem \ref{therem:stabilitycondtion_rank}). (3) The exact computational complexity of the proposed filter is shown (Table \ref{table:per_step_cal}).
           
    

The remainder of this paper is organized as follows. We briefly
 review the Kalman filter and Oja flow in Section \ref{sec:previous_work}.
 We propose a new low-rank Kalman filter and analyze the stability property of the filter in Section 3. In Section 4, we compare the Kalman filter to the proposed low-rank filter. We conclude this paper in Section 5.
 
{\bf{Notation}}
The sets of real and complex numbers are denoted by $\mathbb{R}$ and $\mathbb{C}$, respectively. The sets of $n \times m $ real and complex matrices are described by ${\mathbb{R}}^{n\times m}$ and ${\mathbb{C}}^{n \times m}$, respectively. The $n\times n$ identity matrix is denoted by $I_{n}$ and $n\times m$ zero matrix is denoted by $O_{n,m}$. For $n\times m$ matrix $A$, $A^{\top}$ and $A^{\dagger}$ are the transposed and Hermitian conjugates of $A$, respectively.
For a real symmetric matrix $A$, $A>0(\geq0)$ indicates that $A$ is positive (semi-)definite. 
For a positive semidefinite matrix $A\in {\mathbb{R}}^{n\times n}$, $\sqrt{A}$ indicates a positive semidefinite matrix $B \in {\mathbb{R}}^{n\times n}$ such that $A=B^{2}$.  
For a square matrix $A \in {\mathbb{C}}^{n \times n}$, its eigenvalues are denoted by $\lambda_{i}(A)\quad ({\mathrm{Re}}(\lambda_1(A)) \ge \dots \ge {\mathrm{Re}}(\lambda_n(A)))$, and the corresponding eigenvectors with norm 1 are denoted by $\bm{\psi}_{i}(A)$, $i=1,\dots, n$. 
For the degenerated eigenvalues, we use $\psi _{i}$ as generalized eigenvectors. 
The Stiefel manifold is denoted as ${\mathrm{St}}(r,n) \coloneqq \{X \in {{\mathbb{R}}^{n \times r}}|{X}^{\top}X=I_r\}$.

\section{previous work}
\label{sec:previous_work}
\subsection{Kalman filter}
In this study, we consider the following continuous-time system with discrete-time measurement:
\begin{align}
 \frac{dx(t)}{dt}&=Ax(t) + Gw(t), \label{eq.1} \\
y(t _{k})&=Cx(t_{k}) + H v(t_{k}), \label{eq.2}  
\end{align}
where $x(t) \in {\mathbb{R}}^{n}$ is the state variable, $w(t) \in {\mathbb{R}}^{q}$ is the process noise, $y(t_{k}) \in {\mathbb{R}}^{p}$ is the observed value at time $t_{k} := k h$ with a sampling period $h>0$ and a nonnegative integer $k\geq 0$, and $v(t_{k}) \in {\mathbb{R}}^{p}$ is the observation noise. 
$w(t)$ and $v(t_{k})$ are Gaussian noises that satisfy the following conditions: ${\mathbb{E}}[w(t)]=0$, ${\mathbb{E}}[v(t_{k})]=0$, ${\mathbb{E}}[w(t)w(\tau)^{\top}] = \delta(t-\tau)I_q$, ${\mathbb{E}}[v(t_{k})v(t_{\ell})^{\top}] = \delta _{k,\ell} I_p$, and ${\mathbb{E}}[v(t_{k})w(t)^{\top}] = O_{p,q}$ for any $t>0$ and $k$. We also assume ${\mathrm{det}}(H)\neq 0$.  
The system matrices $A$, $G$, $C$, and $H$ are appropriately sized matrices. 
Throughout this paper, we also assume that there exists $r \in \{ 1,\dots ,n-1\}$ such that ${\mathrm{Re}}(\lambda _{r}(A)) > {\mathrm{Re}}(\lambda _{r+1}(A))$.  

Applying the lifting technique to Eq. \eqref{eq.1} gives the following equation. 
\begin{align*}
x(t_{k+1}) = & e^{Ah} x(t_k) + \int _{t_k}^{t_{k+1}} e^{A(t_{k+1} - \tau)}G w(\tau)d\tau .
\end{align*}
Defining $x[k]:= x(t_k)$, $y[k] := y(t_k)$, and $v_{d}[k]:=v(t_k)$, the discrete-time system is given as follows. 
\begin{align}
x[k+1]&=A_d x[k]+G_dw_d[k], \label{eq.3} \\
y[k]&=Cx[k] + H v_d[k], \label{eq.4} \\
A_d&:=e^{Ah},\ G_d:=\sqrt{\int^{h}_{0} e^{A\tau}GG^{\top}e^{A^{\top}\tau}d\tau}\notag , 
\end{align}   
where $G_d$ is the normalization constant so that $w_d[k] \in {\mathbb{R}}^{n}$ follows mutually independent standard Gaussian distribution. 
Notice that while $w(t)$ is an $q$-dimensional vector, $w_d [k]$ is an $n$-dimensional vector so that $x_{d}[k]$ and $x(t_k)$ has the same statistical moments at each $t_{k}$.  

Let ${\mathcal{Y}}_k$ be the $\sigma$-field generated by measurement outcomes up to $t_{k}$, ${\mathcal{Y}}_k := \sigma \{ y[1],\dots,y[k] \}$, $\hat{x}_{k|\ell } = {\mathbb{E}}[x[k]|{\mathcal{Y}}_{\ell}]$, and  $\hat{P}_{k|\ell} = {\mathbb{E}}[(x[k]-\hat{x}_{k|\ell})(x[k]-\hat{x}_{k|\ell})^{\top}]$. Then, the Kalman filter is computed by the following steps: 
\begin{description}
    \item[\upshape Step 1:] Initial conditions:
    \begin{align}
    \hat{x}_{0|-1} &= \bar{x}_0, \quad \hat{P}_{0|-1} = \Sigma_0 \label{eq:KFini}
    \end{align}
    
    \item[\upshape Step 2:] Kalman gain:
    \begin{align}
    \hat{K}_k &= \hat{P}_{k|k-1}C^{\top}(C\hat{P}_{k|k-1}C^{\top}+M)^{-1} , \label{eq:KFgain}
    \end{align}
    where $M\coloneqq H {H}^{\top}$.
    
    \item[\upshape Step 3:] Filter equations:
    \begin{align}
    \hat{x}_{k|k} &= \hat{x}_{k|k-1} + \hat{K}_k (y[k]-C\hat{x}_{k|k-1}) \label{eq:KFxs}\\
    \hat{x}_{k+1|k} &= A_d\hat{x}_{k|k} \label{eq:KFxs+1} 
    \end{align}
    
    \item[\upshape Step 4:] Error covariance matrix:
    \begin{align}
    \hat{P}_{k+1|k} &= A_d(I_n-\hat{K}_kC)\hat{P}_{k|k-1}A_d^{\top} + G_{d}^{2} \label{eq:Ps+1} 
    \end{align}   
\end{description}
The following proposition provides the steady-state solution to \eqref{eq:Ps+1}.
\begin{proposition}[{\cite[Sec. 7.3]{simon2006optimal}}]\label{prop:KF stability}
If the system $(A_d,G_d,C)$ is reachable and observable, then the algebraic Riccati equation:
\begin{align}
\hat{P}=A_d(I_n-\hat{K}C)\hat{P}A_d^{\top}+ G_d^{2} \notag
\end{align}
has a unique positive definite solution $\hat{P}>0$, and $A_d(I_n-\hat{K}C)$ is Schur stable. Furthermore, the solution $\hat{P}_{k+1|k}$ of Eq. \eqref{eq:Ps+1} converges to $\hat{P}$. Here, $\hat{K} = \hat{P}C^{\top}(C\hat{P}C^{\top}+M)^{-1}$. 
\end{proposition}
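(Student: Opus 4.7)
The plan is to follow the standard approach to the discrete-time algebraic Riccati equation (DARE). First, I would substitute $\hat{K}=\hat{P}C^{\top}(C\hat{P}C^{\top}+M)^{-1}$ into the given equation to rewrite it in the canonical form
\begin{align*}
\hat{P} = A_d \hat{P} A_d^{\top} - A_d \hat{P} C^{\top}(C\hat{P}C^{\top}+M)^{-1} C \hat{P} A_d^{\top} + G_d^{2},
\end{align*}
so that solutions of the DARE coincide with the fixed points of the Riccati operator $\mathcal{R}$ defined by the right-hand side of Eq.~\eqref{eq:Ps+1}.

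For existence of a positive semidefinite solution, I would initialize the recursion \eqref{eq:Ps+1} at $\hat{P}_{0|-1}=O_{n,n}$ and use monotonicity of $\mathcal{R}$ in the positive semidefinite order to show that $\{\hat{P}_{k|k-1}\}$ is monotonically nondecreasing. Observability of $(A_d,C)$ supplies a uniform upper bound via the finite-horizon observability Gramian (the Kalman error covariance is dominated by that of any fixed-gain observer that uses the observability Gramian), so the sequence converges to a limit $\hat{P}\geq 0$ satisfying the DARE. Reachability of $(A_d,G_d)$ then upgrades $\hat{P}$ to a positive definite matrix, since once the reachability Gramian becomes full rank after $n$ steps we have $\hat{P}_{n|n-1}>0$, and this property is preserved in the limit.

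For Schur stability of $A_{\mathrm{cl}}:=A_d(I_n-\hat{K}C)$ and for uniqueness, I would use $\hat{P}$ itself as a Lyapunov certificate. Substituting the expression for $\hat{K}$ back, the DARE rearranges as
\begin{align*}
\hat{P} - A_{\mathrm{cl}}\hat{P}A_{\mathrm{cl}}^{\top} = G_d^{2} + A_d \hat{K} M \hat{K}^{\top} A_d^{\top},
\end{align*}
whose right-hand side is positive semidefinite; combining this with reachability yields Schur stability of $A_{\mathrm{cl}}$ via the discrete Lyapunov theorem. Given a second positive definite solution $\hat{P}'$, the standard Riccati comparison identity expresses $\hat{P}-\hat{P}'$ as the solution of a discrete Lyapunov equation driven by closed-loop matrices that are Schur, forcing $\hat{P}=\hat{P}'$.

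Finally, convergence of $\hat{P}_{k+1|k}$ from an arbitrary $\Sigma_0\geq 0$ follows by a sandwich argument: the sequence started at $O_{n,n}$ gives a monotone lower bound, a suitably large seed gives a monotone upper bound, and both converge to the unique $\hat{P}$ at a geometric rate set by the spectral radius of $A_{\mathrm{cl}}$. The hard part will be rigorously bridging the algebraic reachability hypothesis to strict positive definiteness of the Lyapunov difference above, since this is the technical step that simultaneously unlocks Schur stability, uniqueness, and global convergence.
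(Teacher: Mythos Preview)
The paper does not actually prove this proposition: it is quoted as a standard result from \cite[Sec.~7.3]{simon2006optimal} and used as a black box in the subsequent analysis. Your outline is the classical argument for the discrete-time algebraic Riccati equation and is correct in spirit, so there is nothing to compare against here; if anything, you are supplying strictly more detail than the paper does.
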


Note that a matrix $B \in {\mathbb{R}}^{n\times n}$ is Schur stable if $|\lambda_i(B)|<1$ for all $i=1,\dots,n$.

\subsection{Oja flow}
In this section, we introduce the differential equation called Oja flow that composes the proposal low-rank Kalman filter. Consider the following differential equation: for $U(0) \in {\mathrm{St}}(r,n)$,
\begin{align}
\varepsilon \frac{dU(t)}{dt} &= (I_n - U(t){U(t)}^\top)AU(t) , \label{eq:DFOja}
\end{align}
 where $\varepsilon\in(0,1]$ is a small parameter to adjust the convergence speed of the above equation. 
 Equation \eqref{eq:DFOja} is known as the Oja flow \cite{OJA198569} and usually $\varepsilon=1$ is chosen. 
 Provided that it follows an Oja flow, $U(t)\in{\mathrm{St}}(r,n)$ holds at any time \cite{tsuzuki2024low}. 
 Let ${\mathbb{S}}$ be the set of equilibrium points for Oja flow. 
Note that if $\bar{U} \in {\mathbb{S}}$, then for any orthogonal matrix $W \in {\mathbb{R}}^{r\times r}$, $\bar{U}W \in {\mathbb{S}}$.  This means that equilibrium points can be an uncountable set, and the stability notion of each equilibrium point should be considered as follows: a set $\mathcal{U} \subset {\mathbb{S}}$ is said to be {\em asymptotically stable} if the perturbed orbit from any $\bar{U} \in \mathcal{U}$ returns to the set $\mathcal{U}$ and there exists a perturbed orbit that converges to $\bar{U}$ itself. We will refer to the following Propositions.  

\begin{proposition}[Adapted from {\cite[Thm. 1]{tsuzuki2024low}}]\label{prop:Ojaflow_stability}
Let $\mathcal{U}$ be $\{ (\bm{\psi}_1,\cdots,\bm{\psi}_r)$ $K \in {\mathrm{St}}(r,n) | K\in{\mathbb{C}}^{r\times r} \}$, where $\bm{\psi} _{i} = \bm{\psi} _{i}(A)$. 
Then, the set $\mathcal{U}$ is locally asymptotically stable, and the set ${\mathbb{S}}\setminus {\mathcal{U}}$ is unstable.
\end{proposition}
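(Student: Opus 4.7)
The plan is to linearize the Oja flow on ${\mathrm{St}}(r,n)$ around a general equilibrium, identify the spectrum of the linearized operator via a Sylvester structure, and then single out the top-$r$ invariant subspace using the gap assumption ${\mathrm{Re}}(\lambda_r(A)) > {\mathrm{Re}}(\lambda_{r+1}(A))$.

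First, the equilibrium set ${\mathbb{S}}$ is characterized. The condition $(I_n - \bar{U}\bar{U}^\top)A\bar{U} = 0$ forces $A\bar{U} = \bar{U}(\bar{U}^\top A \bar{U})$, so $\mathrm{col}(\bar{U})$ must be an $r$-dimensional $A$-invariant subspace. Hence every $\bar{U} \in {\mathbb{S}}$ is the orthonormalization of an $r$-tuple of (generalized) eigenvectors of $A$ indexed by some $I = \{i_1, \ldots, i_r\} \subset \{1, \ldots, n\}$, and the set $\mathcal{U}$ in the statement corresponds exactly to $I = \{1, \ldots, r\}$.

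Next, one linearizes. Complete $\bar{U}$ to an orthonormal frame $[\bar{U}, \bar{V}]$ with $\bar{V} \in {\mathrm{St}}(n-r, n)$, so that $I_n - \bar{U}\bar{U}^\top = \bar{V}\bar{V}^\top$. The tangent space of ${\mathrm{St}}(r,n)$ at $\bar{U}$ splits into a vertical part $\{\bar{U}\Omega : \Omega^\top = -\Omega\}$, which lies inside the gauge orbit $\{\bar{U}W : W^\top W = I_r\}$, and a horizontal part $\{\bar{V}Z : Z \in {\mathbb{R}}^{(n-r)\times r}\}$, which genuinely moves $\mathrm{col}(\bar{U})$. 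Substituting $U = \bar{U} + \bar{V}Z$ into the Oja flow and using $A$-invariance (so $\bar{V}^\top A \bar{U} = 0$) shows that vertical perturbations have vanishing first-order dynamics, while horizontal perturbations obey the Sylvester ODE
\begin{align}
\varepsilon \dot{Z} = \bar{V}^\top A \bar{V}\, Z - Z\, \bar{U}^\top A \bar{U}. \notag
\end{align}

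Finally, one reads off the spectrum: since $\mathrm{col}(\bar{U})$ is $A$-invariant, $\bar{U}^\top A \bar{U}$ has eigenvalues $\{\lambda_i(A) : i \in I\}$ and $\bar{V}^\top A \bar{V}$ has $\{\lambda_j(A) : j \notin I\}$, so the Sylvester operator has eigenvalues $\lambda_j(A) - \lambda_i(A)$ with $j \notin I$ and $i \in I$. All of these have strictly negative real parts iff $I = \{1, \ldots, r\}$, which is precisely the gap assumption. Thus by Lyapunov's indirect method, $\mathcal{U}$ is locally asymptotically stable (horizontal modes decay back to $\mathcal{U}$ while vertical modes slide within it, and purely horizontal perturbations converge to $\bar{U}$ itself), whereas every equilibrium in ${\mathbb{S}}\setminus \mathcal{U}$ admits a horizontal mode with positive exponential rate and is therefore unstable. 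The main obstacle I expect is making the linearization rigorous as a flow on the normal bundle after quotienting by the $\mathrm{O}(r)$ gauge action, together with the case in which $A$ is not diagonalizable: then $\bar{U}^\top A \bar{U}$ and $\bar{V}^\top A \bar{V}$ carry Jordan blocks, and the spectrum-of-Sylvester argument has to be phrased in terms of generalized eigenspaces rather than a clean eigenvalue difference.
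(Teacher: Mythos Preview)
The paper does not give a proof of this proposition: it is quoted as an adaptation of \cite[Thm.~1]{tsuzuki2024low} and simply invoked, so there is no in-paper argument to compare against. Your sketch is therefore an independent proof, and the route you take---linearize the Oja flow on ${\mathrm{St}}(r,n)$, split tangent directions into the vertical $\mathrm{O}(r)$-orbit and the horizontal $\bar{V}Z$ directions, and reduce to the Sylvester ODE $\varepsilon\dot Z=\bar{V}^{\top}A\bar{V}\,Z-Z\,\bar{U}^{\top}A\bar{U}$---is the standard and correct one. The block-triangular identity $\bar{V}^{\top}A\bar{U}=0$ that you use does force $\mathrm{spec}(\bar{V}^{\top}A\bar{V})=\{\lambda_j(A):j\notin I\}$, so the Sylvester spectrum $\{\lambda_j-\lambda_i:\,j\notin I,\ i\in I\}$ and the gap hypothesis ${\mathrm{Re}}(\lambda_r)>{\mathrm{Re}}(\lambda_{r+1})$ give exactly the dichotomy you state.

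Two small points to tighten. First, your sentence ``every $\bar{U}\in{\mathbb{S}}$ is the orthonormalization of an $r$-tuple of (generalized) eigenvectors indexed by some $I$'' overstates the classification: when eigenvalues repeat or Jordan blocks occur, $r$-dimensional $A$-invariant subspaces are not in bijection with index sets $I$. What you actually need, and what your block-triangular computation already delivers, is only that $\mathrm{spec}(\bar{U}^{\top}A\bar{U})\cup\mathrm{spec}(\bar{V}^{\top}A\bar{V})=\mathrm{spec}(A)$ as multisets; the instability of any $\bar{U}$ whose column space is not the top-$r$ spectral subspace then follows because some $\lambda_j-\lambda_i$ must have positive real part. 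Second, the obstacle you anticipate---passing from linear stability of the horizontal block to set-stability of $\mathcal{U}$ after quotienting by $\mathrm{O}(r)$---is genuine but routine: the vertical directions form a compact group orbit with identically zero linearized dynamics, so a normal-hyperbolicity or center-manifold argument (the center manifold being exactly the $\mathrm{O}(r)$-orbit) upgrades horizontal linear stability to the set-stability notion the paper uses. With those two clarifications your argument is complete.
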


Proposition \ref{prop:Ojaflow_stability} indicates that the range of $\bar{U}\in{\mathcal{U}}$ is the linear subspace of the eigenvalues of the $r$-dominant eigenvalues of $A$. Furthermore, the $\mathcal{U}$ is the only stable equilibrium subset. Therefore, it is enough to consider an element of $\mathcal{U}$ for steady-state analysis, particularly for stability analysis of the proposed filter below. 
We also introduce the following proposition. 

\begin{proposition}[{\cite[Prop. 3]{tsuzuki2024low}}]\label{prop.eigenvalue_preservation}
Let $\mathcal{U}$ be $\{(\bm{\psi}_1,\cdots,\bm{\psi}_r)$ $K \in {\mathrm{St}}(r,n) | K \in {\mathbb{C}}^{r\times r}\}$, where $\bm{\psi} _{i} = \bm{\psi} _{i}(A)\in {\mathbb{C}}^{n}$, and $U \in {\mathcal{U}}$. 
Then, $\lambda _{i} (A) = \lambda _{i}(U^{\top} A U)$ for $i=1,\dots ,r$.  
\end{proposition}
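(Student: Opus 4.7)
The plan is to exploit the fact that the columns of $U$ span the top-$r$ (generalized) eigenspace of $A$, which is $A$-invariant. Write $\Psi := (\bm{\psi}_1, \ldots, \bm{\psi}_r) \in {\mathbb{C}}^{n\times r}$, so by the definition of ${\mathcal{U}}$ any $U \in {\mathcal{U}}$ factors as $U = \Psi K$ for some $K \in {\mathbb{C}}^{r \times r}$. Since the columns of $\Psi$ are linearly independent and $U$ has rank $r$, $K$ must be invertible. First I would observe that $A$-invariance of $\mathrm{span}(\bm{\psi}_1,\ldots,\bm{\psi}_r)$ yields an upper triangular (block-Jordan) matrix $T \in {\mathbb{C}}^{r \times r}$ whose diagonal entries are $\lambda_1(A),\ldots,\lambda_r(A)$, in order, such that $A \Psi = \Psi T$.

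Next, I would compute $U^{\top} A U = K^{\top} \Psi^{\top} A \Psi K = K^{\top} S T K$, where $S := \Psi^{\top} \Psi \in {\mathbb{C}}^{r \times r}$. Since $U \in \mathrm{St}(r,n)$, the constraint $U^{\top} U = I_r$ expands to $K^{\top} S K = I_r$, and invertibility of $K$ then gives the key identity $K^{\top} S = K^{-1}$. Consequently
\begin{align*}
K (U^{\top} A U) K^{-1} = K K^{\top} S T K K^{-1} = (K K^{\top} S) T = T,
\end{align*}
so $U^{\top} A U$ is similar to $T$ and therefore shares its spectrum, $\{ \lambda_1(A),\ldots,\lambda_r(A)\}$.

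The algebra is short; the main point of care is the mixed real/complex setup, since $U$ lies in the real Stiefel manifold while $\Psi$ and $K$ are parameterized over ${\mathbb{C}}$. This is not a true obstacle: the similarity computation above is carried out entirely in ${\mathbb{C}}^{r \times r}$ and uses only $U^{\top} U = I_r$ together with invertibility of $K$. The real-valuedness of $U = \Psi K$ merely restricts the admissible $K$ inside the parameterization of ${\mathcal{U}}$ and plays no role in the eigenvalue identity. Degenerate eigenvalues are absorbed by taking $\bm{\psi}_i$ to be generalized eigenvectors, which turns the diagonal blocks of $T$ into Jordan blocks, leaving its diagonal (and hence its spectrum) unchanged.
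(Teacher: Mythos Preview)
The paper does not supply its own proof of this proposition; it is quoted verbatim from the authors' earlier work \cite{tsuzuki2024low} and used without argument. Hence there is nothing in the present paper to compare your approach against.

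That said, your proof is correct and self-contained. The key steps---writing $U=\Psi K$ with $K$ invertible, using the $A$-invariance of $\mathrm{span}(\bm{\psi}_1,\dots,\bm{\psi}_r)$ to get $A\Psi=\Psi T$ with $T$ upper triangular having diagonal $\lambda_1(A),\dots,\lambda_r(A)$, and then combining $U^{\top}U=I_r$ (i.e.\ $K^{\top}SK=I_r$, $S=\Psi^{\top}\Psi$) to obtain $K^{\top}S=K^{-1}$ and hence $K(U^{\top}AU)K^{-1}=T$---are all valid. The use of the plain transpose (rather than the Hermitian conjugate) throughout is the right choice here, since the Stiefel constraint and the target matrix $U^{\top}AU$ both involve the real transpose even though $\Psi$ and $K$ are complex. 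One small point you leave implicit: the statement asserts equality index-by-index, $\lambda_i(A)=\lambda_i(U^{\top}AU)$, which follows from your multiset equality of spectra together with the paper's ordering convention $\mathrm{Re}(\lambda_1)\ge\cdots\ge\mathrm{Re}(\lambda_r)$.
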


From Proposition \ref{prop.eigenvalue_preservation}, if ${\mathrm{Re}}(\lambda_{r+1}(A))<0$, then $\bar{U}$ cover the all unstable eigenvalue of $A$ . This property is related to how to cover the unstable eigenvalue of $A_d$ and the stability of the new low-rank Kalman filter.

\section{main results}
\subsection{Low-rank Kalman filter for discrete-time systems}\label{sec:proposed_filter}
It is difficult to implement the Kalman filter if the system and output dimensions are large. To deal with this problem, we approximate $\hat{P}_{k|k-1}$ using a low-dimensional matrix $\tilde{R}_{k|k-1} \in {\mathbb{R}}^{r \times r}$ and $U_k \in {\mathrm{St}}(r,n)$. $\hat{P}_{k|k-1}$ is computed as $\tilde{P}_{k|k-1} = U_k \tilde{R}_{k|k-1} U_{k}^\top$, where $U_k := U(t_{k})$ is the solution of \eqref{eq:DFOja}. 
Next, we introduce a low-rank Kalman filter algorithm corresponding to Eqs. (\ref{eq:KFini})-(\ref{eq:Ps+1}). 
We can compute the estimated value $\tilde{x}_{k|k}$ at time $k$ using the following steps:
\begin{description}
    \item[\upshape Step 1:] Initial conditions:
    \begin{align}
    \tilde{x}_{0|-1} = \bar{x}_0, \quad \tilde{R}_{0|-1} = \tilde{\Sigma}_0 > 0 ,\quad U_{0} \in {\mathrm{St}}(r,n) \label{eq:LKFini}
    \end{align}

    \item[\upshape Step 2:] Solve \eqref{eq:DFOja} over $[t_{k-1},t_{k}]$ and obtain $U_{k}\in {\mathrm{St}}(r,n)$. 
    
    \item[\upshape Step 3:] The low-rank Kalman gain $\tilde{F}_k \in {\mathbb{R}}^{r \times p}$:
    \begin{align}
    \tilde{F}_k = \tilde{R}_{k|k-1} C_{U_k}^\top (C_{U_k} \tilde{R}_{k|k-1} C_{U_k}^\top + M)^{-1} \label{eq:LKFgain}
    \end{align}
    where $C_{U_k} \coloneqq CU_k$.
    
    \item[\upshape Step 4:] Low-rank filter equations:
    \begin{align}
     \tilde{x}_{k|k} &= \tilde{x}_{k|k-1} + U_k\tilde{F}_k \left(y[k] - C \tilde{x}_{k|k-1}\right) \label{eq:LKFxs}\\
     \tilde{x}_{k+1|k} &= A_d \tilde{x}_{k|k} \label{eq:LKFxs+1} 
    \end{align}
    
    \item[\upshape Step 5:] Low dimensional Riccati difference equation; for $\tilde{R}_{k+1|k} \in {\mathbb{R}}^{r \times r}$:
    \begin{align}
    \tilde{R}_{k+1|k} &= A_{U_k}(I_r-\tilde{F}_k C_{U_k}) \tilde{R}_{k|k-1}A_{U_k}^\top + G_{U_k} G_{U_{k}}^\top , \label{eq:Rs+1} 
    \end{align}
    where $A_{U_k} \coloneqq U_{k}^\top A_d U_k \in {\mathbb{R}}^{r\times r}$, and $G_{U_k} \coloneqq U_{k}^\top G_d \in {\mathbb{R}}^{r\times n}$.
\end{description}

An advantage of the proposed low-rank filter is that the calculation of the inverse matrix in \eqref{eq:LKFgain} can drastically be reduced when the dimensions of the state $n$ and the observation $p$ satisfy $n,p \gg r$ by using the Sherman–Morrison–Woodbury formula. 
\begin{align*}
& (C_{U_k}\tilde{R}_{k|k-1} C_{U_k}^\top + M)^{-1} \\
=& M^{-1}
- M^{-1}C_{U_k} 
\\ & \quad \quad \quad \times
\left({\tilde{R}_{k|k-1}}^{-1} + C_{U_k}^\top M^{-1} C_{U_k} \right) ^{-1} C_{U_k}^\top M^{-1} .     
\end{align*}
Since $M^{-1}$ is a time-invariant constant, it can be computed offline and the matrix inverse calculation requires the order of $r^3$, which drastically reduces the computational complexity. 
As in \cite{timecomplexy}, the per-step (iteration) calculation burdens of the Kalman filter (KF), information filter (IF) for reference, and the proposed low-rank Kalman filter (LKF) are summarized in Table \ref{table:per_step_cal}, respectively. 
The proposed filter needs to solve Eq. \eqref{eq:DFOja} at each interval $[t_{k-1},t_{k}]$, we assume that the Euler method is employed to solve \eqref{eq:DFOja} and its iteration number over the interval is denoted by $s$. 

As shown in Table \ref{table:per_step_cal}, the time complexity of KF and IF is $O(n^3)$, whereas the time complexity of LKF is $O(n^2)$. 
Figure \ref{fig:caltime} illustrates the regions of lower time complexity between KF and LKF for $s=4$.  Each curve represents the boundary of the lower time complexity regions for KF and LKF for $p=10,40,100,150$. The upper region of the curve indicates where LKF is more computationally efficient, while the lower region suggests KF is more efficient. From Table \ref{table:per_step_cal}, if we choose $r< 4n/(4+s)$ when $n$ is large, then the proposed filter is computationally superior to the original KF. For linear time-invariant systems, the Oja flow can be precomputed, further reducing the computational burden of the proposed filter.


\begin{table}[!htb]
\caption{Per step calculation burden of filters for linear time-invariant systems}
\label{table:per_step_cal}
\begin{tabular}{|p{2em}|p{18em}|}\hline
          Filter   &   Calculation Burden \\\hline 
   KF & $4n^3+\frac{7}{2}n^2-\frac{3}{2}n+4n^2p+np+3np^2+\frac{1}{6}(16p^3-3p^2-p)$ \\ \hline
  IF & $\frac{1}{6}(50n^3+45n^2-23n)+2n^2p+np$ \\ \hline
  LKF & $(4r+sr+2p+4-\frac{s}{2})n^2+(3r^2+4sr^2-r+4pr+p-2+\frac{s}{2})n+(5r+\frac{1}{2})p^2+(7r^2-3r-\frac{1}{2})p+\frac{1}{6}(56r^3-15r^2-5r)$ \\ \hline
\end{tabular}
\end{table}

\begin{figure}[!htbp]
\centering
\includegraphics[width=1.1\linewidth]{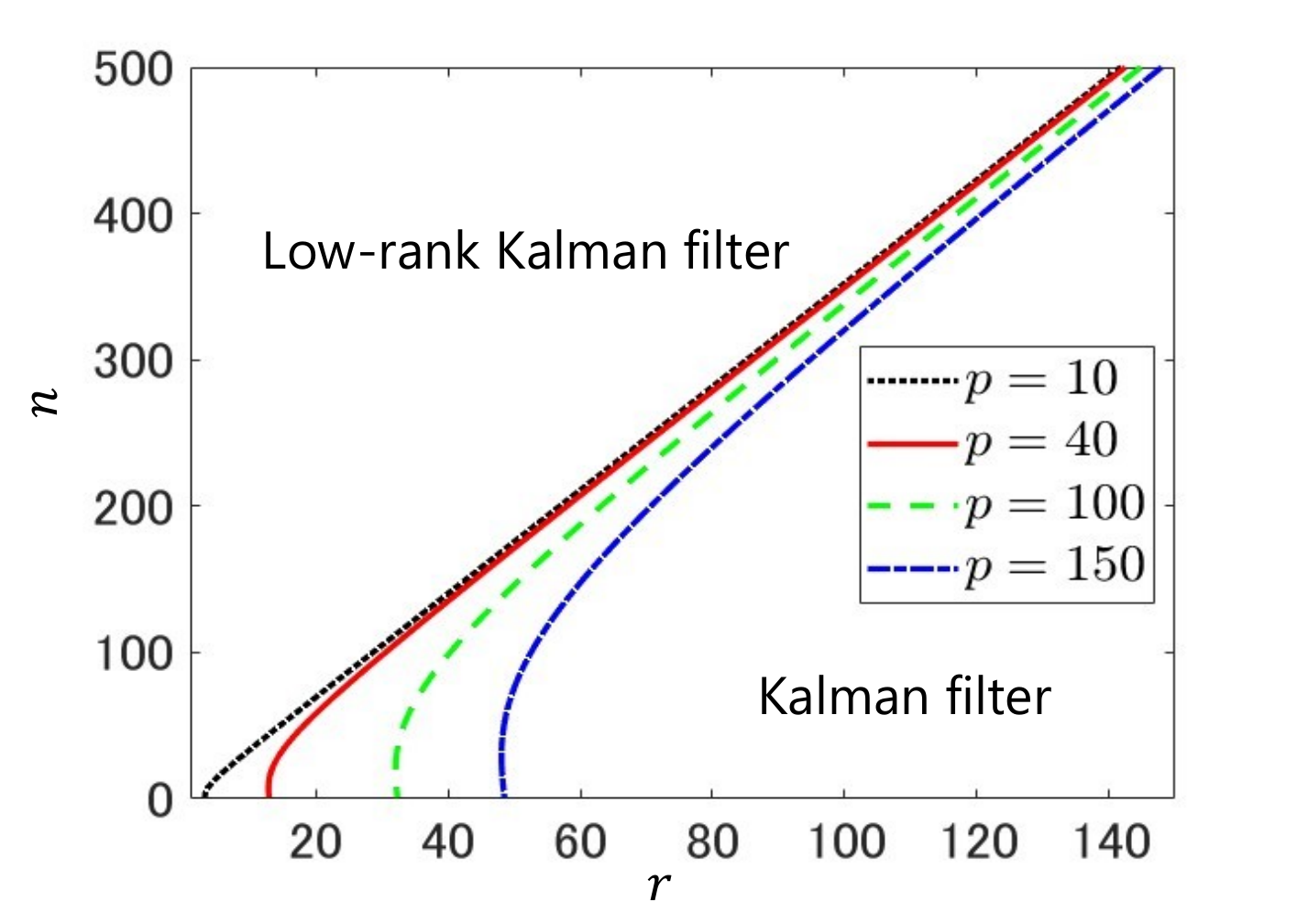}
\caption{The comparison about the time complexity between KF and LKF. The upper region of each curve means that LKF has less time complexity.}
\label{fig:caltime}
\end{figure}

Next, we establish the error covariance matrix $\tilde{V}_{k+1|k} \coloneqq {\mathbb{E}}\left[(x[k+1]-\tilde{x}_{k+1|k})(x[k+1]-\tilde{x}_{k+1|k})^\top\right]$. 
Define the estimation error $\tilde{e}_{k} := x[k] - \tilde{x}_{k|k-1}$.  
Then the time evolution equation of $\tilde{e}_{k}$ is as follows:
\begin{align*}
\tilde{e}_{k+1} = A_d(I_n - U_k\tilde{F}_kC)\tilde{e}_k + G_dw[k] - U_k\tilde{F}_k H v[k].\notag   
\end{align*}
From the above equation, the time evolution equation of $\tilde{V}_{k+1|k}$ is as follows: 
\begin{align}
\tilde{V}_{k+1|k} &= (A_d - A_dU_k\tilde{F}_kC)\tilde{V}_{k|k-1}(A_d - A_dU_k\tilde{F}_kC)^\top \notag \\
& \quad + G_{d}^{2} + U_k\tilde{F}_kM{\tilde{F}_k}^\top U_{k}^\top. \label{eq:Vs+1}
\end{align}
Similarly to Eq. \eqref{eq:Ps+1}, $A_d(I_n-\bar{U}\tilde{F}C)$ determines the stability of the low-rank Kalman filter.  
Hence, we analyze the stability of $A_d(I_n-\bar{U}\tilde{F}C)$ below.

\subsection{Stability of low-rank Kalman filter}
In this section, we analyze important properties such as the boundedness of estimation errors, and the criterion of rank $r$.
First, we show a few lemmas to analyze those properties. 
Let $\mathcal{U}$ be the asymptotically stable equilibrium set of Eq. \eqref{eq:DFOja}.  
In the reminder of this subsection, we assume that $U_{0}=\bar{U} \in \mathcal{U}$. 
For any $\bar{U} \in {\mathcal{U}}$, the following equation holds:
\begin{align}
0=(I_n-\bar{U}{\bar{U}}^{\top})A\bar{U}. \label{eq:equiUs}    
\end{align}

\begin{lemma}\label{lem:AU_eUTAU} The following equation holds. 
\begin{align*}
{\bar{U}}^{\top} A_{d}
\bar{U} = e^{{\bar{U}}^{\top}A\bar{U}h}.    
\end{align*}  
\end{lemma}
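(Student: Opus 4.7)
The plan is to exploit the Oja equilibrium relation in \eqref{eq:equiUs} to show that the range of $\bar{U}$ is $A$-invariant, and then lift this invariance to the matrix exponential $A_d = e^{Ah}$.

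First, I would rewrite \eqref{eq:equiUs} as $A\bar{U} = \bar{U}\bar{U}^{\top}A\bar{U}$, and abbreviate $B := \bar{U}^{\top}A\bar{U} \in {\mathbb{R}}^{r\times r}$, so that the equilibrium condition becomes the compact identity
\begin{align*}
A\bar{U} = \bar{U}B .
\end{align*}
This expresses that $\mathrm{range}(\bar{U})$ is invariant under $A$, with $B$ acting as the restriction expressed in the basis $\bar{U}$.

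Second, I would iterate this identity to conclude $A^{k}\bar{U} = \bar{U}B^{k}$ for every integer $k \ge 0$ by induction: the $k=0$ case is trivial, and the inductive step follows from $A^{k+1}\bar{U} = A(A^{k}\bar{U}) = A\bar{U}B^{k} = \bar{U}B^{k+1}$.

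Third, using the power-series definition of the matrix exponential together with its absolute convergence, I would multiply termwise by $\bar{U}$ on the right to obtain
\begin{align*}
A_{d}\bar{U} = e^{Ah}\bar{U} = \sum_{k=0}^{\infty}\frac{h^{k}}{k!}A^{k}\bar{U} = \bar{U}\sum_{k=0}^{\infty}\frac{h^{k}}{k!}B^{k} = \bar{U}e^{Bh}.
\end{align*}
Finally, multiplying on the left by $\bar{U}^{\top}$ and invoking $\bar{U}^{\top}\bar{U} = I_{r}$ (since $\bar{U} \in \mathrm{St}(r,n)$) yields $\bar{U}^{\top}A_{d}\bar{U} = e^{Bh} = e^{\bar{U}^{\top}A\bar{U}h}$, which is the claimed identity.

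I do not expect a genuine obstacle here: the argument is essentially the observation that an invariant subspace for $A$ remains invariant for every polynomial in $A$, hence for $e^{Ah}$. The only minor care needed is justifying the interchange of the infinite sum with multiplication by $\bar{U}$, which is immediate from the uniform convergence of the exponential series.
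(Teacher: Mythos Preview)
Your proof is correct and follows essentially the same approach as the paper: both use the equilibrium relation \eqref{eq:equiUs} to establish $A^{k}\bar{U} = \bar{U}(\bar{U}^{\top}A\bar{U})^{k}$ for all $k\ge 0$, and then pass to the exponential via its power series. Your version is slightly more explicit (it proves the stronger intermediate identity $A_{d}\bar{U} = \bar{U}e^{Bh}$ before left-multiplying by $\bar{U}^{\top}$), but the underlying idea is identical.
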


\begin{proof}
From Equation \eqref{eq:equiUs}, we have $A^2\bar{U} = A\bar{U}{\bar{U}}^{\top}A\bar{U} = \bar{U} ({\bar{U}}^{\top}A\bar{U})^{2}$. Thus, for any nonnegative integer $m$, ${\bar{U}}^{\top}A^m\bar{U} = ({\bar{U}}^{\top}A\bar{U})^m$ holds. Since $A_{d}=e^{Ah}$, this proves the lemma.  
\end{proof}

Computation of the matrix exponential requires a heavy computational burden if the matrix has a high dimension. Lemma \ref{lem:AU_eUTAU} shows $A_{\bar{U}} = \exp ( {\bar{U}}^{\top}A\bar{U}h )  \in {\mathbb{R}}^{r\times r}$, which can reduce the computational burden if $r\ll n$. 
As of Prop. \ref{prop.eigenvalue_preservation}, the reduced matrix $A_{\bar{U}}$ inherits a part of $A$'s eigenvalues. 

\begin{lemma}\label{lem:AU_eigenvalue}
Let $\lambda _{i} = \lambda_i({\bar{U}}^{\top}A\bar{U})$ be the eigenvalues of ${\bar{U}}^{\top}A\bar{U}$, and $\bm{\psi}_i = \bm{\psi} _{i}({\bar{U}}^{\top}A\bar{U}) \in {\mathbb{C}}^{r}$ be the corresponding eigenvectors. Then, the following holds.
\begin{align*}
{\bar{U}}^{\top}e^{Ah}\bar{U}\bm{\psi}_i = e^{\lambda_i h}\bm{\psi}_i,\quad i=1,\dots ,r.     
\end{align*}  
\end{lemma}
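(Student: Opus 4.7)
The plan is to reduce the claim to the spectral mapping theorem for the matrix exponential, via the previous lemma. By Lemma \ref{lem:AU_eUTAU}, we have the identity ${\bar{U}}^{\top}e^{Ah}\bar{U} = e^{{\bar{U}}^{\top}A\bar{U}h}$, so the statement becomes: for each eigenpair $(\lambda_i, \bm{\psi}_i)$ of the $r \times r$ matrix $M := {\bar{U}}^{\top}A\bar{U}$, we have $e^{Mh}\bm{\psi}_i = e^{\lambda_i h}\bm{\psi}_i$. This is a standard fact, but the excerpt does not cite it, so I would prove it directly.

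The proof I would give is by inserting the power series definition of $e^{Mh}$ and applying $M\bm{\psi}_i = \lambda_i\bm{\psi}_i$ iteratively. First I would note that by induction $M^k \bm{\psi}_i = \lambda_i^k \bm{\psi}_i$ for every nonnegative integer $k$. Then the absolutely convergent series
\begin{align*}
e^{Mh}\bm{\psi}_i = \sum_{k=0}^{\infty} \frac{(Mh)^k}{k!}\bm{\psi}_i = \sum_{k=0}^{\infty} \frac{(\lambda_i h)^k}{k!}\bm{\psi}_i = e^{\lambda_i h}\bm{\psi}_i
\end{align*}
yields the conclusion after combining with the identity from Lemma \ref{lem:AU_eUTAU}.

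The only subtlety I anticipate concerns the degenerate case. The paper's notational convention allows $\bm{\psi}_i$ to be a generalized eigenvector when $\lambda_i$ is a repeated eigenvalue, and for genuine generalized eigenvectors the simple scalar identity $e^{Mh}\bm{\psi}_i = e^{\lambda_i h}\bm{\psi}_i$ fails in general (one gets an additional polynomial factor in $h$ from $e^{(M-\lambda_i I)h}$). I would therefore either (a) restrict the statement to true eigenvectors, which is sufficient for the subsequent stability analysis since eigenvalues and their magnitudes are what matter, or (b) remark that within each generalized eigenspace $M$ is triangularizable with $\lambda_i$ on the diagonal, so $e^{Mh}$ restricted to that invariant subspace has only $e^{\lambda_i h}$ as its eigenvalue, preserving the spectral correspondence needed later. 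Either framing should suffice, and I expect the authors intend interpretation (a), making the proof essentially a one-line computation once Lemma \ref{lem:AU_eUTAU} is in hand.
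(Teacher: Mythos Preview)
Your proof is correct and essentially identical to the paper's: both invoke Lemma~\ref{lem:AU_eUTAU} to rewrite ${\bar{U}}^{\top}e^{Ah}\bar{U}$ as $e^{{\bar{U}}^{\top}A\bar{U}h}$, expand the exponential as its power series, and apply $M\bm{\psi}_i=\lambda_i\bm{\psi}_i$ term by term. Your remark about generalized eigenvectors is a valid caveat that the paper does not address explicitly; interpretation~(a) matches how the result is actually used downstream.
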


\begin{proof}
Using Lemma \ref{lem:AU_eUTAU} for ${\bar{U}}^{\top}e^{Ah}\bar{U}\bm{\psi}_i$ gives  
\begin{align*}
{\bar{U}}^{\top}e^{Ah}\bar{U} \bm{\psi}_i &= 
\sum _{\ell =0}^{\infty} \frac{(h{\bar{U}}^{\top}A\bar{U})^{\ell}}{\ell !} \bm{\psi}_i \\
&= 
\left(1 + h\lambda_i + \dots \right) \bm{\psi}_i 
= e^{\lambda_ih}\bm{\psi}_i.    
\end{align*}
Hence, Lemma \ref{lem:AU_eigenvalue} is proven.    
\end{proof}

Lemma \ref{lem:AU_eigenvalue} and Proposition \ref{prop.eigenvalue_preservation} imply that if $A$ has $r^{\prime} \leq r$ Hurwitz-unstable eigenvalues, then $A_{\bar{U}}$ has the $r^{\prime}$ Schur-unstable eigenvalues.  

\begin{lemma}\label{lem:LKF_observable}
If the system $(C, A) \in {\mathbb{R}}^{p\times n} \times {\mathbb{R}}^{n\times n}$ is observable, then $(C, A_d)\in {\mathbb{R}}^{p\times n} \times {\mathbb{R}}^{n\times n}$ and $(C_{\bar{U}}, A_{\bar{U}})$ $\in {\mathbb{R}}^{p\times r} \times {\mathbb{R}}^{r\times r}$ are also observable.
\end{lemma}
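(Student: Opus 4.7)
The plan is to split the lemma into the two implications $(C,A)\Rightarrow(C,A_d)$ and $(C,A_d)\Rightarrow(C_{\bar{U}},A_{\bar{U}})$, and handle them in order.

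For the first implication I would invoke the classical result on preservation of observability under sampling: provided the sampling period $h$ does not cause a pair of distinct eigenvalues $\lambda_i(A),\lambda_j(A)$ to collide under $\lambda\mapsto e^{\lambda h}$ (i.e.\ no difference $\lambda_i-\lambda_j$ is a nonzero integer multiple of $2\pi\mathrm{i}/h$), the Popov--Belevitch--Hautus rank test for $(C,A)$ transfers directly to $(C,A_d)$. This is standard and I would cite it rather than reprove it; the non-pathological sampling condition is presumably tacit in the paper's framework.

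The second implication carries the new content. The strategy is to pull unobservable modes of the reduced pair back through $\bar{U}$ to the ambient pair. The key identity is
\begin{align*}
A_d^{k}\bar{U}=\bar{U}A_{\bar{U}}^{k},\quad k=0,1,2,\dots,
\end{align*}
obtained as follows. The equilibrium condition \eqref{eq:equiUs} rearranges to $A\bar{U}=\bar{U}(\bar{U}^{\top}A\bar{U})$; iterating (as in the proof of Lemma \ref{lem:AU_eUTAU}) gives $A^{m}\bar{U}=\bar{U}(\bar{U}^{\top}A\bar{U})^{m}$ for all $m\geq0$, and summing the exponential series together with Lemma \ref{lem:AU_eUTAU} yields $A_d\bar{U}=\bar{U}A_{\bar{U}}$; the displayed identity then follows by induction. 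Multiplying on the left by $C$ gives $CA_d^{k}\bar{U}=C_{\bar{U}}A_{\bar{U}}^{k}$ for all $k\geq 0$.

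With this relation in hand, observability of $(C_{\bar{U}},A_{\bar{U}})$ follows directly: if $v\in\mathbb{R}^{r}$ satisfies $C_{\bar{U}}A_{\bar{U}}^{k}v=0$ for every $k$, then $CA_d^{k}(\bar{U}v)=0$ for every $k$, so $\bar{U}v$ lies in the unobservable subspace of $(C,A_d)$, which is trivial by the first implication. Hence $\bar{U}v=0$, and orthonormality of the columns of $\bar{U}\in{\mathrm{St}}(r,n)$ forces $v=0$. The main obstacle is really the first implication, which is not a theorem of pure linear algebra but rests on an implicit sampling assumption on $h$; the second implication is essentially algebraic and is immediate once the $A_d$-invariance of the range of $\bar{U}$ is recognized via Lemma \ref{lem:AU_eUTAU}.
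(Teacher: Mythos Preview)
Your proposal is correct, and the second implication rests on the same structural fact as the paper's argument: the invariance relation $A_d\bar{U}=\bar{U}A_{\bar{U}}$ obtained from the equilibrium condition \eqref{eq:equiUs} together with Lemma~\ref{lem:AU_eUTAU}. The paper, however, packages the conclusion via the PBH eigenvector test: it takes an unobservable eigenvector $\bm{v}_i$ of $(C_{\bar{U}},A_{\bar{U}})$ and checks that $\bar{U}\bm{v}_i$ is then an unobservable eigenvector of $(C,A_d)$. You instead use the Kalman observability-matrix criterion, pulling back the whole unobservable subspace through the identity $CA_d^{k}\bar{U}=C_{\bar{U}}A_{\bar{U}}^{k}$. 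The two routes are equivalent in strength; yours is marginally more elementary in that it never touches the eigenstructure of $A_{\bar{U}}$, while the paper's PBH version dovetails with the eigenvalue bookkeeping used elsewhere (Proposition~\ref{prop.eigenvalue_preservation}, Lemma~\ref{lem:AU_eigenvalue}). On the first implication $(C,A)\Rightarrow(C,A_d)$ you are in fact more careful than the paper, which simply declares it ``clear'': you correctly note that this step requires a non-pathological sampling hypothesis on $h$, without which observability can genuinely be lost under discretization.
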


\begin{proof}
It is clear that if the system $(C, A)$ is observable, then the system $(C, A_d)$ is also observable. 
Next, we point out the contradiction to $(C,A)$ observability if $(C_{\bar{U}}, A_{\bar{U}})$ is unobservable. 
If $(C_{\bar{U}}, A_{\bar{U}})$ is unobservable, then there exists an eigenvalue $\lambda_i = \lambda_i (A_{\bar{U}})$ and its corresponding eigenvector ${\bm{v}} _{i}$ such that 
\begin{align*}
A_{\bar{U}} \bm{v}_i = 
{\bar{U}}^{\top}A_d\bar{U}\bm{v}_i = 
\lambda_i \bm{v}_i, \quad C\bar{U}\bm{v}_i = 0.    
\end{align*}
Note that from Prop. \ref{prop.eigenvalue_preservation} and Lemma \ref{lem:AU_eigenvalue}, $\{ \lambda_i (A_{\bar{U}})\}_{i=1}^{r} = \{ e^{\lambda _{i}(A)h} \} _{i=1}^{r} $. 
From Equation (\ref{eq:equiUs}), we also have 
\begin{align*}
\bar{U}A_{\bar{U}} \bm{v}_{i} & = {\bar{U}} {\bar{U}}^{\top}A_d\bar{U}\bm{v}_i = A_d\bar{U}\bm{v}_{i}
= 
\lambda_i \bar{U}\bm{v}_{i}. 
\end{align*}
Therefore, $\bar{U}\bm{v}_{i}$ is an eigenvector of $A_{d}$. 
Since the column vectors of $\bar{U} \in {\mathrm{St}}(r,n)$ are linearly independent, $\bar{U}\bm{v}_{i} \neq 0$. This contradicts the observability of $(C, A_d)$. Hence, If the system $(C, A)$ is observable, then the system $(C, A_d)$ and the system $(C_{\bar{U}}, A_{\bar{U}})$ are also observable.
\end{proof}

\begin{lemma}\label{lem:reachable}
If the system $(A, G)\in {\mathbb{R}}^{n\times n} \times {\mathbb{R}}^{n\times q}$ is reachable, then $(A_d, G_d) \in {\mathbb{R}}^{n\times n} \times {\mathbb{R}}^{n\times n}$ and the system $(A_{\bar{U}}, G_{\bar{U}})\in {\mathbb{R}}^{r\times r} \times {\mathbb{R}}^{r\times n}$ are also reachable.    
\end{lemma}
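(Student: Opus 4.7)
The plan is to reduce both reachability claims to the single observation that the $n\times n$ matrix $G_d$ is nonsingular whenever $(A,G)$ is reachable. First I would recall the standard equivalence from linear systems theory: $(A,G)$ is reachable if and only if the continuous-time controllability Gramian
\begin{align*}
W_c(h) \coloneqq \int_{0}^{h} e^{A\tau} G G^{\top} e^{A^{\top}\tau}\,d\tau
\end{align*}
is positive definite for any $h>0$. Since $G_d$ is defined as the positive semidefinite square root of $W_c(h)$, $W_c(h)>0$ forces $G_d>0$; in particular $G_d$ is invertible.

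Given $G_d$ nonsingular, reachability of $(A_d,G_d)$ is essentially immediate: the Kalman reachability matrix $[G_d\ A_d G_d\ \cdots\ A_d^{n-1}G_d]$ already has rank $n$ from its first block alone. For the reduced pair I would apply the same idea one dimension down. Writing $G_{\bar{U}}=\bar{U}^{\top} G_d$ as the composition of the invertible (hence surjective) map $G_d:\mathbb{R}^n\to\mathbb{R}^n$ with $\bar{U}^{\top}:\mathbb{R}^n\to\mathbb{R}^r$, which is surjective because $\bar{U}^{\top}\bar{U}=I_r$, yields that $G_{\bar{U}}$ has rank $r$. Hence the $r\times rn$ reachability matrix $[G_{\bar{U}}\ A_{\bar{U}} G_{\bar{U}}\ \cdots\ A_{\bar{U}}^{r-1}G_{\bar{U}}]$ has rank $r$ from its first block, proving reachability of $(A_{\bar{U}},G_{\bar{U}})$.

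The only step requiring any real argument is establishing $G_d>0$ from reachability of $(A,G)$; everything after is one line of linear algebra, so I do not anticipate a serious obstacle. In contrast to the observability Lemma \ref{lem:LKF_observable}, this proof uses neither the Oja-flow equilibrium identity \eqref{eq:equiUs} nor the eigenvalue-preservation Proposition \ref{prop.eigenvalue_preservation}: invertibility of $G_d$ makes reachability of both the sampled and the reduced systems automatic, bypassing any mode-by-mode reasoning.
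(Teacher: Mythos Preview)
Your proposal is correct and follows essentially the same route as the paper: both arguments hinge on the single observation that reachability of $(A,G)$ forces the controllability Gramian $G_d^2=\int_0^h e^{A\tau}GG^\top e^{A^\top\tau}\,d\tau$ to be positive definite, after which reachability of both $(A_d,G_d)$ and $(A_{\bar U},G_{\bar U})$ is immediate from the full rank of $G_d$ and of $G_{\bar U}=\bar U^\top G_d$ respectively. The paper phrases the reduced step via $\bar U^\top G_d^2\bar U$ having rank $r$, which is exactly your statement that $G_{\bar U}$ has rank $r$ since $G_{\bar U}G_{\bar U}^\top=\bar U^\top G_d^2\bar U$.
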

\begin{proof}
If the system $(A, G)$ is reachable, then the following equation holds. 
\begin{align}
\int^{h}_0e^{A\tau}GG^{\top}e^{A^{\top}\tau}d\tau = G_{d}^{2} >0\notag  
\end{align}
From the above equation, $(A_d, G_d)$ is reachable. Next, we consider 
$\bar{U}^\top G_d^2 \bar{U}$. Since the rank of $\bar{U}^\top G_{d}^{2}\bar{U}$ is $r$, $(A_{\bar{U}}, G_{\bar{U}})$ is reachable. Thus, If the system $(A, G)$ is reachable, then $(A_d, G_d)$ and the system $(A_{\bar{U}}, G_{\bar{U}})$ are also reachable.
\end{proof}

Using Lemmas \ref{lem:LKF_observable} and \ref{lem:reachable}, we can prove a proposition regarding the convergence of $\tilde{R}_{k+1|k}$.

\begin{proposition}\label{prop:tildeR_unique}
 If the system $(A_{\bar{U}}, G_{\bar{U}}, C_{\bar{U}})$ is reachable and observable, then the algebraic Riccati equation:
\begin{align*}
A_{\bar{U}}(\tilde{R}-\tilde{F}C_{\bar{U}}\tilde{R})A_{\bar{U}}^{\top} + G_{\bar{U}} G_{\bar{U}} ^{\top}=\tilde{R}    
\end{align*}
has a unique positive definite solution $\tilde{R}$, and $A_{\bar{U}}(I_r-\tilde{F}C_{\bar{U}})$ is stable. Furthermore, the solution $\tilde{R}_{k+1|k}$ of \eqref{eq:Rs+1} converges to $\tilde{R}$, where $\tilde{F}=\tilde{R}{C_{\bar{U}}}^{\top}(C_{\bar{U}}\tilde{R}{C_{\bar{U}}}^{\top}+M)^{-1}$.   
\end{proposition}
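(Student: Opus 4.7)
The plan is to recognize that Proposition \ref{prop:tildeR_unique} is just Proposition \ref{prop:KF stability} applied to the reduced-order system $(A_{\bar{U}}, G_{\bar{U}}, C_{\bar{U}})$. Indeed, inspecting Eq. \eqref{eq:Rs+1} and comparing it with Eq. \eqref{eq:Ps+1}, the former is structurally identical to the latter under the substitution $A_d \mapsto A_{\bar{U}}$, $C \mapsto C_{\bar{U}}$, $G_d^{2} \mapsto G_{\bar{U}} G_{\bar{U}}^{\top}$, and $\hat{K}_k \mapsto \tilde{F}_k$, where $\tilde{F}_k$ in \eqref{eq:LKFgain} is precisely the Kalman gain for this reduced system.

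First, I would verify that the reduced system can be cast into the exact form required by Proposition \ref{prop:KF stability}. Since $G_{\bar{U}} \in {\mathbb{R}}^{r\times n}$ rather than being square, I would introduce $\tilde{G}_d := \sqrt{G_{\bar{U}} G_{\bar{U}}^{\top}} \in {\mathbb{R}}^{r\times r}$; the Riccati recursion depends on $G_{\bar{U}}$ only through $G_{\bar{U}} G_{\bar{U}}^{\top} = \tilde{G}_d^{2}$, and reachability depends only on this Gramian as well, so $(A_{\bar{U}}, \tilde{G}_d)$ is reachable iff $(A_{\bar{U}}, G_{\bar{U}})$ is. Thus the reduced triple $(A_{\bar{U}}, \tilde{G}_d, C_{\bar{U}})$ satisfies the hypotheses of Proposition \ref{prop:KF stability}.

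Next, I would invoke Proposition \ref{prop:KF stability} directly: it yields (i) existence and uniqueness of a positive definite solution $\tilde{R}$ to the algebraic Riccati equation, (ii) Schur stability of $A_{\bar{U}}(I_r - \tilde{F} C_{\bar{U}})$, and (iii) convergence of the Riccati difference equation \eqref{eq:Rs+1} to $\tilde{R}$, with $\tilde{F} = \tilde{R} C_{\bar{U}}^{\top} (C_{\bar{U}} \tilde{R} C_{\bar{U}}^{\top} + M)^{-1}$. This delivers every conclusion of Proposition \ref{prop:tildeR_unique}.

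There is essentially no serious obstacle; the proof is a one-line reduction once the structural correspondence is made explicit. The only minor subtlety worth a sentence in the write-up is the non-squareness of $G_{\bar{U}}$, which is handled by passing to $\tilde{G}_d$ as above. I would not need to invoke Lemmas \ref{lem:LKF_observable} or \ref{lem:reachable} inside this proof, since reachability and observability of the reduced triple are assumed in the statement; those lemmas instead justify using this proposition whenever the original continuous-time triple $(A, G, C)$ is reachable and observable.
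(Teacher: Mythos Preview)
Your proposal is correct and matches the paper's approach: the paper does not give a separate proof of Proposition~\ref{prop:tildeR_unique} at all, treating it as an immediate instance of Proposition~\ref{prop:KF stability} applied to the reduced triple $(A_{\bar U},G_{\bar U},C_{\bar U})$. Your remark about replacing the rectangular $G_{\bar U}$ by $\tilde G_d=\sqrt{G_{\bar U}G_{\bar U}^{\top}}$, and your observation that Lemmas~\ref{lem:LKF_observable}--\ref{lem:reachable} are not needed inside the proof since reachability and observability are assumed, are both on point.
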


Since $\mathcal{U}$ can contain an infinite number of elements and different $\bar{U}^{\prime} \in \mathcal{U}$ gives different matrix $A_{\bar{U}^{\prime}}$, it is unclear whether the results depend on the choice of the steady-state solution of the Oja flow. 
However, the following holds.  
\begin{proposition}\label{prop:P_uniqueness}
Suppose that $(A_{\bar{U}}, G_{\bar{U}}, C_{\bar{U}})$ is reachable and observable. 
Let $\tilde{R}_{\bar{U}}$ and $\tilde{R}_{\bar{U}^{\prime}}$ be the steady-state solution of Eq. \eqref{eq:Rs+1} with respect to $\bar{U}, \bar{U}^{\prime} \in \mathcal{U}$, respectively.  
Then, $\bar{U} \tilde{R}_{\bar{U}} \bar{U}^{\top} = \bar{U}^{\prime}\tilde{R}_{\bar{U}^{\prime}} \bar{U}^{\prime}{}^{\top}$.  
\end{proposition}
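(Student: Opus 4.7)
The plan is to exploit the fact that all elements of $\mathcal{U}$ share the same column span, so any two steady-state Oja matrices differ by a right multiplication with an orthogonal matrix, and then to verify that the low-dimensional algebraic Riccati equation transforms covariantly under this change of basis.

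First, I would apply Proposition \ref{prop:Ojaflow_stability} to obtain, for any $\bar{U},\bar{U}^{\prime} \in \mathcal{U}$, a matrix $W \in {\mathbb{R}}^{r\times r}$ with $W^{\top}W = I_r$ such that $\bar{U}^{\prime} = \bar{U} W$; this is because both $\bar{U}$ and $\bar{U}^{\prime}$ are real orthonormal frames for the $r$-dominant invariant subspace of $A$, and two such real frames are related by a real orthogonal change of basis. A direct substitution then yields the transformation rules
\begin{align*}
A_{\bar{U}^{\prime}} = W^{\top} A_{\bar{U}} W,\quad C_{\bar{U}^{\prime}} = C_{\bar{U}} W,\quad G_{\bar{U}^{\prime}} = W^{\top} G_{\bar{U}}.
\end{align*}
In particular, $(A_{\bar{U}^{\prime}},G_{\bar{U}^{\prime}},C_{\bar{U}^{\prime}})$ is a similarity transform of $(A_{\bar{U}},G_{\bar{U}},C_{\bar{U}})$ and therefore inherits reachability and observability.

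Second, I would propose the candidate $\tilde{R}_{\bar{U}^{\prime}} = W^{\top}\tilde{R}_{\bar{U}} W$ and verify that it satisfies the algebraic Riccati equation associated with $\bar{U}^{\prime}$. Using $W W^{\top} = W^{\top} W = I_r$, a short computation shows that the induced gain is $\tilde{F}_{\bar{U}^{\prime}} = W^{\top}\tilde{F}_{\bar{U}}$, after which all intervening $W$ and $W^{\top}$ factors telescope and $W^{\top}\tilde{R}_{\bar{U}}W$ is confirmed as a fixed point of the map of Proposition \ref{prop:tildeR_unique}. By the uniqueness clause of that proposition, $\tilde{R}_{\bar{U}^{\prime}} = W^{\top}\tilde{R}_{\bar{U}} W$, and the claim follows at once from
\begin{align*}
\bar{U}^{\prime}\tilde{R}_{\bar{U}^{\prime}}\bar{U}^{\prime\top} = \bar{U} W W^{\top}\tilde{R}_{\bar{U}} W W^{\top}\bar{U}^{\top} = \bar{U}\tilde{R}_{\bar{U}}\bar{U}^{\top}.
\end{align*}

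The main obstacle I anticipate is the very first step: Proposition \ref{prop:Ojaflow_stability} parameterizes $\mathcal{U}$ through a possibly complex matrix $K$, so some care is required to argue that, because $\bar{U}$ and $\bar{U}^{\prime}$ live on the \emph{real} Stiefel manifold, the linking transformation $W$ is not merely unitary but real and orthogonal. Once this realness point is settled, the remainder of the argument is a mechanical covariance check of the Riccati recursion followed by a single appeal to the uniqueness result already established.
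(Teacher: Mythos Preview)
Your proposal is correct and follows the natural route: the paper itself omits the proof and simply cites \cite{yamada2021comparison}, whose argument is precisely the orthogonal-conjugation plus uniqueness-of-the-ARE approach you outline. The realness concern you flag dissolves immediately by taking $W := \bar{U}^{\top}\bar{U}^{\prime}$, which is real by construction, satisfies $W^{\top}W = (\bar{U}^{\prime})^{\top}\bar{U}\bar{U}^{\top}\bar{U}^{\prime} = (\bar{U}^{\prime})^{\top}\bar{U}^{\prime} = I_r$ because $\bar{U}\bar{U}^{\top}$ is the orthogonal projector onto the common column space, and gives $\bar{U}W = \bar{U}\bar{U}^{\top}\bar{U}^{\prime} = \bar{U}^{\prime}$.
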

The proof is the same as in \cite[Proposition 4]{yamada2021comparison}, so we skip the proof.  
Therefore, the error covariance \eqref{eq:Vs+1} is uniquely determined. 

From Proposition \ref{prop:tildeR_unique}, the following proposition regarding the stability of the matrix $A_d(I_n-\bar{U}\tilde{F}C)$ holds.
\begin{proposition}\label{prop:LKF_eigenvalue}
Let $\sigma_1, \dots, \sigma_r$ be the eigenvalues of the matrix $A_{\bar{U}}(I_r-\tilde{F}C_{\bar{U}}) \in {\mathbb{R}}^{r\times r}$. Then, the eigenvalues of the matrix $A_d(I_n-\bar{U}\tilde{F}C) \in {\mathbb{R}}^{n\times n}$ are $\sigma_1, \dots, \sigma_r$, $\exp(\lambda_{r+1}(A) h), \dots, \exp(\lambda_n(A)h)$.    
\end{proposition}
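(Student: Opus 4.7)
The plan is to apply an orthogonal similarity transformation that block-upper-triangularizes $A_d(I_n-\bar{U}\tilde{F}C)$, with $A_{\bar{U}}(I_r-\tilde{F}C_{\bar{U}})$ appearing as the top-left diagonal block and a residual $(n-r)\times(n-r)$ block carrying the remaining $n-r$ eigenvalues. Concretely, I extend $\bar{U}\in{\mathrm{St}}(r,n)$ to a full orthogonal matrix $Q=[\bar{U},\bar{U}_{\perp}]$ with $\bar{U}_{\perp}\in{\mathrm{St}}(n-r,n)$ satisfying $\bar{U}^{\top}\bar{U}_{\perp}=O_{r,n-r}$, and study $Q^{\top}A_d(I_n-\bar{U}\tilde{F}C)Q$.

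The crucial intermediate fact is that the column space of $\bar{U}$ is $A_d$-invariant. The Oja equilibrium identity \eqref{eq:equiUs} gives $A\bar{U}=\bar{U}(\bar{U}^{\top}A\bar{U})$; iterating to $A^{m}\bar{U}=\bar{U}(\bar{U}^{\top}A\bar{U})^{m}$ and summing the exponential series, combined with Lemma \ref{lem:AU_eUTAU}, yields $A_d\bar{U}=\bar{U}A_{\bar{U}}$, so $\bar{U}_{\perp}^{\top}A_d\bar{U}=0$. Thus $Q^{\top}A_dQ$ is block upper triangular with diagonal blocks $A_{\bar{U}}$ and $\bar{U}_{\perp}^{\top}A_d\bar{U}_{\perp}$. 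A direct computation also shows that $Q^{\top}(I_n-\bar{U}\tilde{F}C)Q$ is block upper triangular with diagonal blocks $I_r-\tilde{F}C_{\bar{U}}$ and $I_{n-r}$. Multiplying the two triangular factors makes $Q^{\top}A_d(I_n-\bar{U}\tilde{F}C)Q$ block upper triangular with diagonal blocks $A_{\bar{U}}(I_r-\tilde{F}C_{\bar{U}})$ and $\bar{U}_{\perp}^{\top}A_d\bar{U}_{\perp}$, so its spectrum is the union (with multiplicity) of the two blocks' spectra, the first contributing exactly $\sigma_1,\dots,\sigma_r$.

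To identify the trailing block's spectrum with $\{e^{\lambda_{r+1}(A)h},\dots,e^{\lambda_n(A)h}\}$, I argue by cancellation. The full matrix $Q^{\top}A_dQ$ is similar to $A_d=e^{Ah}$ and hence has spectrum $\{e^{\lambda_i(A)h}\}_{i=1}^{n}$; by Proposition \ref{prop.eigenvalue_preservation} together with Lemma \ref{lem:AU_eUTAU}, the spectrum of the top block $A_{\bar{U}}$ is $\{e^{\lambda_i(A)h}\}_{i=1}^{r}$. Block triangularity then forces the remaining $n-r$ eigenvalues to live in $\bar{U}_{\perp}^{\top}A_d\bar{U}_{\perp}$, completing the proof.

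I do not anticipate a serious obstacle: the only nontrivial ingredient is the $A_d$-invariance of ${\mathrm{range}}(\bar{U})$, which follows in one line from the Oja equilibrium condition and Lemma \ref{lem:AU_eUTAU}; the rest reduces to block matrix multiplication and spectral bookkeeping against the known spectra of $A_d$ and $A_{\bar{U}}$.
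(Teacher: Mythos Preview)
Your proof is correct and follows essentially the same route as the paper: both arguments hinge on the $A_d$-invariance of ${\mathrm{range}}(\bar{U})$ (i.e., $A_d\bar{U}=\bar{U}A_{\bar{U}}$, which is Eq.~\eqref{eq:supplemental_equality} in disguise) to block-upper-triangularize $A_d(I_n-\bar{U}\tilde{F}C)$ and then read off the two blocks' spectra. Your presentation is slightly cleaner in two places: you make the similarity explicit via $Q=[\bar{U},\bar{U}_{\perp}]$ rather than working with the projections $\bar{U}\bar{U}^{\top}$ and $I_n-\bar{U}\bar{U}^{\top}$, and you identify the trailing block's spectrum by multiset subtraction from the known spectrum of $A_d$, whereas the paper attempts to exhibit eigenvectors $(I_n-\bar{U}\bar{U}^{\top})\bm{\psi}_m(A)$ directly---an argument that is correct in intent but whose displayed quadratic-form identity does not by itself establish the eigenvalue claim.
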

Notice that $\exp(\lambda _{i}(A)h)$, $i=1,\dots ,n$, are the eigenvalues of $A_{d}$ and $\bm{\psi}_{i}(A)$ are the corresponding eigenvectors of $A_{d}$ and also $A$.  From the definition, $\bm{\psi}_{i}(A_d)$ is not the eigenvector or generalized eigenvector of $\exp(\lambda _{i}(A)h)$ in general. 

\begin{proof}
To prove the statements, we first show the following.
\begin{align}
& (I_{n} - \bar{U}\bar{U}^{\top}) A_d(I_n-\bar{U}\tilde{F}C) \bar{U} \notag \\
=& (I_{n} - \bar{U}\bar{U}^{\top}) A_d \bar{U} (I_{r} - \tilde{F}C \bar{U})
=
O_{n,r} \label{eq:supplemental_equality}
\end{align}
The equalities follow Eq. \eqref{eq:equiUs} and Lemma \ref{lem:AU_eUTAU}.  
The above relation means that choosing a suitable coordinate, $A_d(I_n-\bar{U}\tilde{F}C)$ can be block diagonalized, and $\bar{U}\bar{U}^{\top} A_d(I_n-\bar{U}\tilde{F}C) \bar{U} \bar{U}^{\top}$ is the left-upper block while $(I_n-\bar{U}{\bar{U}}^{\top})A_d(I_n-\bar{U}\tilde{F}C) (I_n-\bar{U}{\bar{U}}^{\top})$ is the right-lower block. For the proof, it is enough to investigate the eigenvalues of these two blocks of the matrix.

Let $\bm{u}_i  = {\bm{\psi}}_{i}(A_{\bar{U}}(I_r-\tilde{F}C_{\bar{U}})) \in {\mathbb{C}}^{r}$, $i=1,\dots ,r$.  
Hereafter, we only use eigenvectors to investigate the eigenvalues if the eigenvalues are degenerated. 
Then, using Eq. \eqref{eq:supplemental_equality}, the following equation holds.
\begin{align*}
\bar{U}A_{\bar{U}}(I_r-\tilde{F}C_{\bar{U}})\bm{u}_i =& A_d(I_n-\bar{U}\tilde{F}C)\bar{U}\bm{u}_i 
= \sigma_i\bar{U}\bm{u}_i.    
\end{align*}
Since $\bar{U}\bm{u}_i \neq 0$, $\sigma_1, \dots, \sigma_r$ are eigenvalues of $A_d(I_n-\bar{U}\tilde{F}C)$. 
Next, for $m > r$, $(I_n - \bar{U} {\bar{U}}^{\top}) {\bm{\psi}}_m(A) \neq 0$ holds because each row vector $\bar{U}$ is linearly dependent of ${\bm{\psi}}_i (A)$ for $i=1,\dots ,r$ and ${\bm{\psi}} _{m}(A)$, $m>r$, is independent from ${\bm{\psi}}_i (A)$. 
Hence, from Eq. \eqref{eq:equiUs},
\begin{align*}
\bm{\psi}^{\dagger}_m(A)(I_n-\bar{U}{\bar{U}}^{\top}) (A_d-A_d\bar{U}\tilde{F}C-e^{\lambda _{m}(A)h} I_n )\\
\times(I_n-\bar{U}{\bar{U}}^{\top}) \bm{\psi}_m(A) = 0.    
\end{align*}
Thus, $\exp(\lambda_{r+1}(A) h), \dots, \exp(\lambda_n(A)h)$ are also eigenvalues of $A_d(I_n-\bar{U}\tilde{F}C)$. Therefore, the eigenvalues of $A_d(I_n-\bar{U}\tilde{F}C)$ are $\sigma_1, \dots, \sigma_r$, $e^{\lambda_{r+1}(A) h}, \dots, e^{\lambda_n(A)h}$.   
\end{proof}

Note that from Prop. \ref{prop:P_uniqueness}, $A_d(I_n-\bar{U}\tilde{F}C)$ is uniquely determined.  
From Proposition \ref{prop:tildeR_unique}, if we choose the rank $r$ such that ${\mathrm{Re}}(\lambda_{r+1}(A)) < 0$, the proposed low rank Kalman filter is stable. Additionally, it is equal to guarantee the boundedness of the error covariance matrix $\tilde{V}_{k+1|k}$.
Next, we establish how to choose the rank $r$.
\begin{theorem}\label{therem:stabilitycondtion_rank}
Suppose system $(A_d, G_d, C)$ is reachable and observable, and $A$ has $r^{\prime}$ Hurwitz unstable eigenvalues. Then, $A_d(I_n-\bar{U}\tilde{F}C)$ is Schur stable if and only if $r\geq r^{\prime}$.
\end{theorem}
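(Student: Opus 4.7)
The plan is to reduce the theorem immediately to Proposition \ref{prop:LKF_eigenvalue}, which gives an explicit list of the eigenvalues of $A_d(I_n - \bar{U}\tilde{F}C)$, namely the $r$ values $\sigma_1,\dots,\sigma_r$ of $A_{\bar{U}}(I_r - \tilde{F}C_{\bar{U}})$ together with the $n-r$ residual values $e^{\lambda_{r+1}(A)h},\dots,e^{\lambda_n(A)h}$. Schur stability is then equivalent to requiring both groups of eigenvalues to lie strictly inside the unit disk, and these two groups can be treated independently: the first is controlled by Proposition \ref{prop:tildeR_unique}, and the second is controlled entirely by the signs of $\mathrm{Re}(\lambda_i(A))$ for $i>r$.

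For sufficiency ($r\ge r'$), I would use the fact that the $\lambda_i(A)$ are ordered by decreasing real part. Since exactly $r'$ of them are Hurwitz-unstable, $\mathrm{Re}(\lambda_i(A))<0$ for every $i>r'\ge \text{nothing beyond } r$, so whenever $i>r$ one has $|e^{\lambda_i(A)h}|=e^{\mathrm{Re}(\lambda_i(A))h}<1$. For the remaining eigenvalues $\sigma_1,\dots,\sigma_r$ I would invoke Proposition \ref{prop:tildeR_unique}, which guarantees Schur stability of $A_{\bar{U}}(I_r-\tilde{F}C_{\bar{U}})$ as soon as $(A_{\bar{U}},G_{\bar{U}},C_{\bar{U}})$ is reachable and observable; that in turn follows from Lemmas \ref{lem:LKF_observable} and \ref{lem:reachable} applied to the hypothesis that $(A_d,G_d,C)$ is reachable and observable.

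For necessity I would argue by contrapositive. Suppose $r<r'$. Then $r+1\le r'$, so $\lambda_{r+1}(A)$ is itself one of the Hurwitz-unstable eigenvalues, giving $\mathrm{Re}(\lambda_{r+1}(A))\ge 0$ and hence $|e^{\lambda_{r+1}(A)h}|\ge 1$. Proposition \ref{prop:LKF_eigenvalue} puts this value in the spectrum of $A_d(I_n-\bar{U}\tilde{F}C)$, so the closed-loop matrix cannot be Schur stable. Combining the two directions gives the claimed equivalence.

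The only real subtlety, and the step I would be most careful about, is verifying that the reduced triple $(A_{\bar{U}},G_{\bar{U}},C_{\bar{U}})$ inherits reachability and observability from the discrete triple $(A_d,G_d,C)$, since Lemmas \ref{lem:LKF_observable} and \ref{lem:reachable} are phrased starting from the continuous-time data $(C,A)$ and $(A,G)$. The arguments inside those lemmas however actually pass through $A_d$ and $G_d$ directly: if $(C_{\bar{U}},A_{\bar{U}})$ had an unobservable mode $\bm{v}_i$, Eq.~\eqref{eq:equiUs} would yield $A_d\bar{U}\bm{v}_i=\bar{U}A_{\bar{U}}\bm{v}_i=\lambda_i\bar{U}\bm{v}_i$ with $C\bar{U}\bm{v}_i=0$, contradicting observability of $(C,A_d)$, and the reachability argument is analogous once one notes that $\bar{U}^\top G_d^2\bar{U}>0$. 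So no genuinely new idea is needed beyond what those lemmas already supply; the proof is essentially a one-line application of Proposition \ref{prop:LKF_eigenvalue} plus the correspondence $|e^{\lambda h}|<1\Leftrightarrow\mathrm{Re}(\lambda)<0$.
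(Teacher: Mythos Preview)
Your proposal is correct and follows essentially the same route as the paper: invoke Proposition~\ref{prop:LKF_eigenvalue} to list the eigenvalues of $A_d(I_n-\bar U\tilde F C)$, use Proposition~\ref{prop:tildeR_unique} together with Lemmas~\ref{lem:LKF_observable} and~\ref{lem:reachable} to ensure $|\sigma_i|<1$, and then observe that the residual eigenvalues $e^{\lambda_{r+1}(A)h},\dots,e^{\lambda_n(A)h}$ lie in the open unit disc if and only if $r\ge r'$. Your explicit treatment of the gap between the theorem's hypothesis on $(A_d,G_d,C)$ and the continuous-time hypotheses of Lemmas~\ref{lem:LKF_observable} and~\ref{lem:reachable} is a welcome clarification that the paper's own proof leaves implicit.
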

\begin{proof}
From Proposition \ref{prop:KF stability} and Lemmas \ref{lem:LKF_observable} and \ref{lem:reachable}, $\sigma_1, \dots, \sigma_r$ are stable eigenvalues. Thus, from Prop. \ref{prop:LKF_eigenvalue}, if $A_d(I_n-\bar{U}\tilde{F}C)$ is stable, then $r\le r^{\prime}$. 
Next,if $r<r^{\prime}$, then, from Proposition \ref{prop:KF stability}, $A_d(I_n-\bar{U}\tilde{F}C)$ includes an eigenvalue $e^{\lambda _{r^\prime}(A) h} $ outside the unit disc; this contradicts to the assumption of stability. Therefore, $r\geq r^{\prime}$.
\end{proof}

This theorem provides a design criterion for $r$ to guarantee the boundedness of estimation errors.
If the characteristic polynomial of the system is given, the Jury's stability criterion can count the number of roots outside the unit disc \cite[Thm. 3.3]{astrom1984computer}. 

\section{numerical simulation}
In this section, we demonstrate two numerical examples of the proposed low-rank Kalman filter.  


\subsection{Verification of the bounded estimations}

    Consider the following system parameters: $n=10$, $A$ is randomly generated, $C=\left[I_{4},\ O_{4,6}\right]$, $G=I_{10}$, $H = I_{4}$, $h=0.01$, $r=6$, $\Sigma_0=I_{10}$, $\tilde{\Sigma}_0=I_6$. 
    The generated $A$ has $6$ Hurwitz unstable eigenvalues and the system is observable and reachable. 
    Set $U_0$ as $[I_6 , O_{6,4}]^{\top}$ and use $\varepsilon=0.01$ for Eq. \eqref{eq:DFOja}. 
    From the statements of Theorem \ref{therem:stabilitycondtion_rank}, the minimum $r$ is 6 and Figure \ref{fig1:n=6} shows the mean square errors of the low-rank Kalman filter.  
    As seen, the estimation error is bounded.  
    

    


\begin{figure}[!htbp]
\centering
\includegraphics[width=1.0\linewidth]{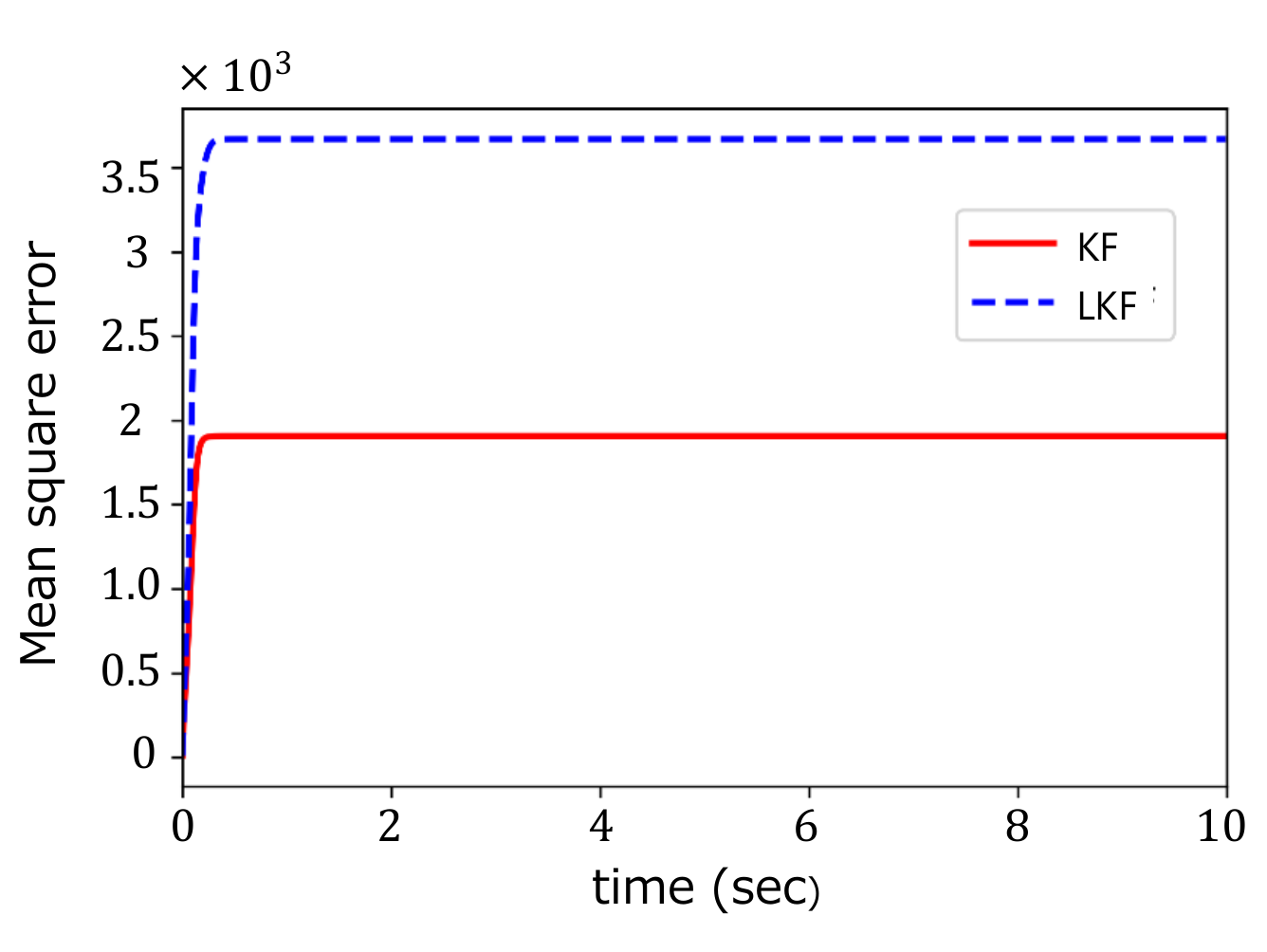}
\caption{Time evolution of the mean square one-step ahead estimation errors ${\mathrm{Tr}} [ \tilde{V}_{k+1|k} ]$}
\label{fig1:n=6}
\end{figure}





 \subsection{Impact of the choice of $r$}
    Next, we examine how the choice of $r$ impacts the estimation errors.  
    Consider the following system parameters: $n=100$, $A$ is a randomly generated symmetric matrix, $C=\left[I_{40},\ O_{40,60}\right]$, $G=I_{100}$, $H = I_{40}$, $h=0.001$,  $\Sigma_0=I_{100}$, $\tilde{\Sigma}_0=I_r$. 
    The generated $A$ has $49$ Hurwitz unstable eigenvalues and the system is observable and reachable. 
    Then, we examine relationship rank $r$ and steady mean square error of one-step estimation errors ${\mathrm{Tr}}[ \tilde{V}_{\infty}]$, where $\tilde{V}_{\infty} := \lim_{k\to\infty}\tilde{V}_{k+1|k}$ in Equation \eqref{eq:Vs+1}. 
    The dependency of $r$ for the steady state estimation errors, normalized by that of KF, is shown in Figure \ref{fig3:r vs MSE}.


\begin{figure}[!h]
\centering
\includegraphics[width=1.0\linewidth]{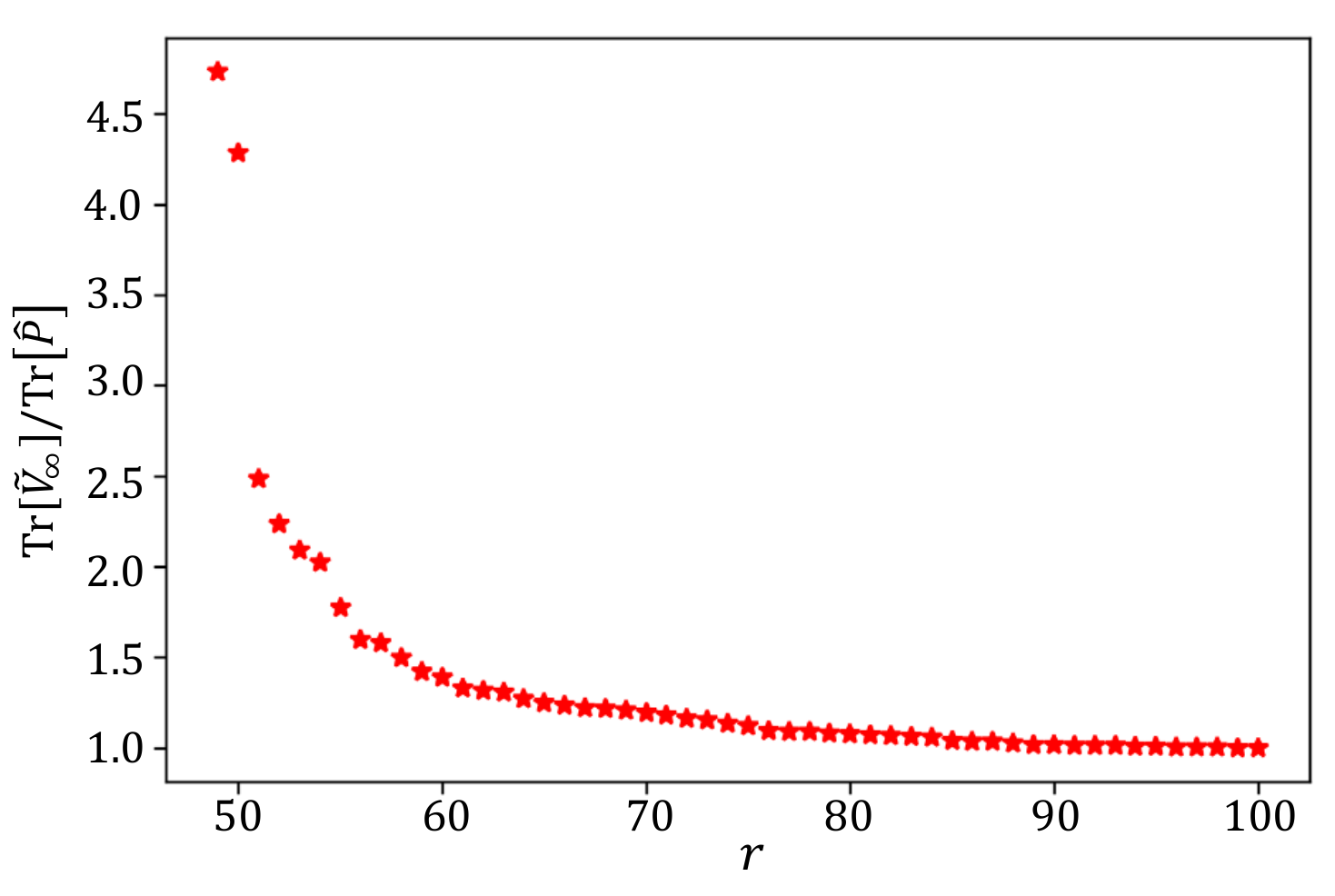}
\caption{${\mathrm{Tr}} [ \tilde{V}_{\infty} ]/\mathrm{Tr}[\hat{P}]$ versus $r$}
\label{fig3:r vs MSE}
\end{figure}


From Fig. \ref{fig3:r vs MSE}, if $r$ increases, ${\mathrm{Tr}}[ \tilde{V}_{\infty}]$ monotonically decreases. Choosing $r=49$, the minimum requirement for bounded estimation errors, is the worst, but in this example, we can see that slightly increasing $r$ drastically decreases ${\mathrm{Tr}}[ \tilde{V}_{\infty}]$. 

\section{conclusion}

In this paper, we proposed a new low-rank Kalman filter to reduce computational complexity while maintaining bounded estimation errors. We analyzed the properties of this filter, including the boundedness of estimation errors and the calculation burden, and identified the rank $r$ condition necessary for filter stability.

In this paper, we focused on continuous-time systems with discrete-time observations. In the future, we will propose and analyze a low-rank Kalman filter for general discrete-time LTI, linear time-varying, and nonlinear stochastic systems.  

While revising this paper, we realized the seminal works on continuous-time reduced QR-decomposition-based low-rank filters for continuous-time linear time-varying and nonlinear systems \cite{trevisan2011kalman,frank2018detectability,tranninger2020uniform,tranninger2022detectability}. The continuous-time reduced QR-decomposition algorithm is essentially the same as the Oja flow \eqref{eq:DFOja}; the solution of the reduced continuous-time QR-decomposition algorithm is described as $U^{\prime}(t) = U(t) W(t) \in {\mathrm{St}}(r,n)$, where $U(t) \in {\mathrm{St}}(r,n)$ is the solution of the Oja flow and the $W(t) \in {\mathbb{R}}^{r\times r}$ is a orthogonal matrix that is the solution of $\frac{d}{dt}W(t) = W(t) S(t)$ with a specific skew-symmetric matrix $S(t)=-S(t)^{\top} \in {\mathbb{R}}^{r\times r}$. The resulting reduced filters are essentially the same as of \cite{yamada2021new}.  

It is noteworthy that $U^{\prime}(t)$ converges to the invariant set $\mathcal{U}$ but never converges to any point of $\mathcal{U}$. Consequently, the reduced Riccati equation using $U^{\prime}(t)$ lacks equilibrium points. Therefore, our Oja flow-based approach remains meaningful for the steady-state low-rank Kalman filter.

\section{Acknowledgement}

This work was supported by JSPS KAKENHI Grant Numbers JP19K03619 and JP23K26126. 


\begin{thebibliography}{99}

\bibitem{kalman1960new}
R.~E. Kalman, ``{A New Approach to Linear Filtering and Prediction Problems},'' {\em J. Basic Eng.}, vol.~82, pp.~35--45, 03 1960.

\bibitem{angeo-24-2451-2006}
G.~Galanis, P.~Louka, P.~Katsafados, I.~Pytharoulis, and G.~Kallos, ``{Applications of Kalman filters based on non-linear functions to numerical weather predictions},'' {\em Ann. Geophys.}, vol.~24, no.~10, pp.~2451--2460, 2006.

\bibitem{HUANG1996159}
H.-C. Huang and N.~Cressie, ``{Spatio-temporal prediction of snow water equivalent using the Kalman filter},'' {\em Comput. Stat. Data An.}, vol.~22, no.~2, pp.~159--175, 1996.

\bibitem{Verlaan1998Efficinet}
M.~Verlaan, ``{Efficient Kalman Filtering Algorithms for Hydrodynamic Models},'' {\em Ph.D. Thesis, Technische Universiteit, Delft}, 1998.

\bibitem{wells2013kalman}
C.~Wells, {\em {The Kalman Filter in Finance}}, 
\newblock Springer, 2013.

\bibitem{harvey2009unobserved}
A.~Harvey and S.~J. Koopman, ``Unobserved components models in economics and finance,'' {\em IEEE Contr. Syst. Mag.}, vol.~29, no.~6, pp.~71--81, 2009.

\bibitem{Cardoso2008}
R.~Cardoso, R.~F. de Camargo, H.~Pinheiro and H.~A. Gründling, ``Kalman filter based synchronisation methods,'' {\em IET Gener., Transm. Dis.}, vol.~2, pp.~542--555(13), July 2008.

\bibitem{hoffmann2013minimal}
N.~Hoffmann and F.~W. Fuchs, ``{Minimal invasive equivalent grid impedance estimation in inductive--resistive power networks using extended Kalman filter},'' {\em IEEE T. Power Electr.}, vol.~29, no.~2, pp.~631--641, 2013.

\bibitem{gugercin2004survey}
S.~Gugercin and A.~C. Antoulas, ``{A Survey of Model Reduction by Balanced Truncation and Some New Results},'' {\em Int. J. Contr.}, vol.~77, no.~8, pp.~748--766, 2004.

\bibitem{Rowley2017}
C.~W. Rowley and S.~T. Dawson, ``Model reduction for flow analysis and control,'' {\em Annu. Rev. Fluid Mech.}, vol.~49, pp.~387--417, 2017.

\bibitem{simon2006optimal}
D.~Simon, {\em {Optimal State Estimation: Kalman, $H_{\infty}$, and Nonlinear Approaches}}.
\newblock John Wiley \& Sons, 2006.

\bibitem{BENNER2016430}
P.~Benner and Z.~Bujanović, ``On the solution of large-scale algebraic Riccati equations by using low-dimensional invariant subspaces,'' {\em Linear Algebra Appl.}, vol.~488, pp.~430--459, 2016.

\bibitem{Simoncini2016}
V.~Simoncini, ``Analysis of the rational Krylov subspace projection method for large-scale algebraic Riccati equations,'' {\em SIAM J. Matrix Anal. A.}, vol.~37, no.~4, pp.~1655--1674, 2016.

\bibitem{Bonnabel2013}
S.~Bonnabel and R.~Sepulchre, ``{The Geometry of Low-Rank Kalman Filters}," in {\em Matrix Information Geometry}, pp.~53--68.
\newblock Springer, 2013.

\bibitem{schmidt2023rank}
J.~Schmidt, P.~Hennig, J.~Nick, and F.~Tronarp, ``{The rank-reduced Kalman filter: Approximate dynamical-low-rank filtering in high dimensions},'' {\em Adv. Neur. In.}, vol.~36, pp.~61364--61376, 2023.


\bibitem{yamada2021new}
S.~Yamada and K.~Ohki, ``{On a New Low-Rank Kalman-Bucy Filter and its Convergence Property},'' in {\em Proc. ISCIE SSS'20}, vol.~2021, pp.~16--20, 2021.

\bibitem{yamada2021comparison}
S.~Yamada and K.~Ohki, ``{Comparison of Estimation Error between Two Different Low-Rank Kalman-Bucy Filters},'' in {\em Proc. 60th Ann. Conf. SICE}, 2021.

\bibitem{tsuzuki2024low}
D.~Tsuzuki and K.~Ohki, ``{Low-rank approximated Kalman-Bucy filters using Oja's principal component flow for linear time-invariant systems},'' {\em IEEE Control Syst. Lett.}, Vol. 8, pp.~1583--1588, 2024.

\bibitem{OJA198569}
E.~Oja and J.~Karhunen, ``{On stochastic approximation of the eigenvectors and eigenvalues of the expectation of a random matrix},'' {\em J. Math. Anal. and Appl.}, vol.~106, no.~1, pp.~69--84, 1985.

\bibitem{yan1994global}
W.-Y. Yan, U.~Helmke, and J.~B. Moore, ``{Global analysis of Oja's flow for neural networks},'' {\em IEEE T. Neural Networ.}, vol.~5, no.~5, pp.~674--683, 1994.

\bibitem{timecomplexy}
N.~Assimakis, M.~Adam, and A.~Douladiris, ``{Information Filter and Kalman Filter Comparison: Selection of the Faster Filter},'' {\em Int. J. Inform. Engin.}, vol.~2, pp.~1--5, 01 2012.

\bibitem{astrom1984computer}
K.~J.~Astr{\"o}m and B.~Wittenmark,
{\em Computer-Controlled Systems: Theory and Design}, Prentice-Hall, 1997.

\bibitem{trevisan2011kalman}
A.~Trevisan and L.~Palatella, ``{On the Kalman Filter error covariance collapse into the unstable subspace},'' {\em Nonlinear Proc. Geoph.}, vol.~18, no.~2, pp.~243--250, 2011.

\bibitem{frank2018detectability}
J.~Frank and S.~Zhuk, ``{A detectability criterion and data assimilation for nonlinear differential equations},'' {\em Nonlinearity}, vol.~31, no.~11, p.~5235, 2018.

\bibitem{tranninger2020uniform}
M.~Tranninger, R.~Seeber, M.~Steinberger, and M.~Horn, ``{Uniform detectability of linear time varying systems with exponential dichotomy},'' {\em IEEE Control Syst. Lett.}, vol.~4, no.~4, pp.~809--814, 2020.

\bibitem{tranninger2022detectability}
M.~Tranninger, R.~Seeber, M.~Steinberger, M.~Horn, and C.~P{\"o}tzsche, ``{Detectability Conditions and State Estimation for Linear Time-Varying and Nonlinear Systems},'' {\em SIAM J. Control Optim.}, vol.~60, no.~4, pp.~2514--2537, 2022.

\end{thebibliography}

\end{document}